\newtheorem{thm}{Theorem}[section]
\newtheorem{lem}[thm]{Lemma}
\newtheorem{prop}[thm]{Proposition}
\theoremstyle{definition}
\theoremstyle{remark}
\newtheorem{exa}[thm]{Example}
\newtheorem{fa}[thm]{Fact}
\numberwithin{equation}{section}
\begin{document}

\title[Action of prime ideals on generalized derivations-I]{Action of prime ideals on generalized derivations-I}
\author{Nadeem ur Rehman$^1$, Hafedh M. Alnoghashi$^2$}

\address{$^1$ Department of Mathematics, Aligarh Muslim University, 202002 Aligarh, India}
\email{rehman100@gmail.com, nu.rehman.mm@amu.ac.in}
\address{$^2$ Department of Mathematics, Faculty of Science, Sana'a University, Yemen}
\email{halnoghashi@gmail.com}

\subjclass{}
\keywords{}
%




\begin{abstract}
In the present paper, we investigate the commutativity of quotient ring $R/P$ where $R$ is any ring and $P$ is a prime ideal of $R$ which admits generalized derivations are satisfying some algebraic identities acting on prime ideals $P$.
\end{abstract}

\maketitle

\noindent
\emph{2010 Mathematics Subject Classification:} 16W25, 16N60, 16U80.
\vskip 10pt

\noindent
\emph{Key words:} {\small Prime ideal; Generalized derivations; integral domain.}

\section*{Introduction}
Throughout this article, $R$ will represent an associative ring. Recall that a proper ideal $P$ of $R$ is said to be prime if for any $x,y\in R,$ $xRy\subseteq P $ implies that $x\in P$ or $y\in P.$ Therefore, $R$ is called a prime ring if and only if $(0)$ is the prime ideal of $R.$ For any $x,y\in R,$ the symbol $[x,y]$ will denote the commutator $xy-yx,$ while the symbol $x\circ y$ will stand for the anticommutator $xy+yx.$
A mapping $d:R\rightarrow R$ is said to be a derivation of a ring $R$ if $d$ is additive and satisfies $d(xy)=d(x)y+xd(y)$ for all $x,y\in R.$
The very first example of derivation is the mapping $x \mapsto [x,a]$ for all $x \in R$ and $a$ is fixed element of $R$. Such a mapping is called the inner derivation of $R$. More generally, if $d$ is derivation of $R$ and  $F:R\rightarrow R$ be an additive  mapping such that $F(xy)=F(x)y+xd(y)$ for all $x,y\in R$, then $F$ is called a generalized derivation of $R$ with the associated derivation $d$.  For a fixed $a, b \in R$ a typical example of a generalized derivation is the mapping $x \mapsto ax +xb$, which is called the generalized inner derivation induced by $a$ and $b$, with associated derivation $x \mapsto [x, a]$. Further, in a very systematic paper \cite{Lee}, Lee extended the notion of generalized derivation.

During the last few decades there has been an ongoing interest in the study of relationship between the commutative structure of associative rings and certain types of derivations defined on them. In this vein, Daif and Bell \cite{DaifBell} studied derivations of semiprime rings that fix the commutators of appropriate subsets. Precisely, they proved that if $R$ is a semiprime ring, $I$ a nonzero ideal of $R$ and $d$ a derivation of $R$ such that $d([x,y])=[x,y]$ for all $x,y\in I$, then $I$ is contained in $Z(R)$.  Ashraf and first Author \cite{AshrafRehman} examined the same identity on square-closed Lie ideals of prime rings. In \cite{Quadri}, Quadri et al. extended this result to the class of generalized derivations and proved that if $R$ is a prime ring, $I$ a nonzero ideal of $R$ and $R$ admits a generalized derivation $F$ associated with nonzero derivation $d$ such that $F([x,y])=[x,y]$ for all $x,y\in I$, then $R$ is commutative. Recently many authors have obtained commutativity of prime and semiprime rings admitting suitably constrained additive mappings, as automorphisms, generalized derivations acting on appropriate subsets of the rings, see~\cite{mamouni2020derivations}, \cite{mir2020commutativity}, and \cite{TSD}.

In the present paper, we aim to investigate the commutativity of quotient ring $R/P$ where $R$ is any ring and $P$ is a prime ideal of $R$ which admits generalized derivations are satisfying algebraic identities acting on prime ideals $P$. Moreover, some examples are given to demonstrate that the condition imposed on the hypotheses of various theorems is essential.

\section{The Main Result}
The proof of the following fact is that a group cannot written as the set-theoretic union of its two proper subsets.
\begin{fa}\label{f1}
Let $R$ be a ring, $P$ be a prime ideal of $R$ and $S$ an additive subgroup of $R$. Let $\phi: S \to R$ and $\xi: S \to R$ be additive functions such that $\phi(s)R\xi(s) \in P$ for all $s \in S$. Then either $\phi(s) \in P$ for all $s \in P$ or $\xi(s) \in P$ for all $s\in S$.
\end{fa}
\begin{lem}\label{lem-1}
Let $R$ be a ring and $P$ be a prime ideal of  $R$, If
\begin{itemize}
  \item[(i)]$[x,y]\in P$
  \item[(ii)]$x\circ y\in P$
\end{itemize}
for all $x,y\in R$, then $R/P$ is a commutative integral domain.
\end{lem}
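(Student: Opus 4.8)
The plan is to read each item as a separate sufficient condition and to reduce everything to statements about the prime ring $\bar R := R/P$, writing $\bar x$ for the coset of $x$. In both cases the goal splits into two parts: first show that $\bar R$ is commutative, and then deduce that it is an integral domain. The second part is uniform and I would record it once: if $\bar R$ is commutative and $\bar x\bar y = 0$, then for every $\bar r$ we have $\bar x\bar r\bar y = \bar r\bar x\bar y = 0$, so $\bar x\bar R\bar y = 0$, and primeness of $P$ forces $\bar x = 0$ or $\bar y = 0$. Thus a commutative $\bar R$ automatically has no zero divisors, i.e. is an integral domain.

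Hypothesis (i) is the easy case. The condition $[x,y]\in P$ for all $x,y\in R$ says exactly that $\bar x\bar y = \bar y\bar x$ in $\bar R$, so $\bar R$ is commutative, and the remark above finishes it.

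Hypothesis (ii) is where the work lies. Passing to $\bar R$, the relation $x\circ y\in P$ becomes $\bar x\bar y = -\bar y\bar x$ for all $\bar x,\bar y$. The key step I would carry out is to apply this anticommutativity to the pair $(\bar x,\bar y\bar z)$ and expand on the left: $\bar x(\bar y\bar z) = (\bar x\bar y)\bar z = (-\bar y\bar x)\bar z = -\bar y(\bar x\bar z) = -\bar y(-\bar z\bar x) = \bar y\bar z\bar x$, while the right-hand side is $-(\bar y\bar z)\bar x = -\bar y\bar z\bar x$. Comparing yields $2\,\bar y\bar z\bar x = 0$ for all $\bar x,\bar y,\bar z$, which lifts to $2yzx\in P$ for all $x,y,z\in R$.

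From here primeness does the rest. Fixing $x$ and $y$ and letting $z$ range over $R$ gives $(2y)Rx\subseteq P$, so $2y\in P$ or $x\in P$. Since $P$ is a proper ideal there exists $x\notin P$, whence $2y\in P$ for every $y\in R$; that is, $2R\subseteq P$ and $\bar R$ has characteristic $2$. In characteristic $2$ the relation $\bar x\bar y = -\bar y\bar x$ becomes $\bar x\bar y = \bar y\bar x$, so $\bar R$ is commutative, and the uniform argument above shows it is an integral domain. I expect the main obstacle to be the substitution producing $2yzx\in P$; once that identity is in hand, the reduction to characteristic $2$ and the passage to an integral domain are routine.
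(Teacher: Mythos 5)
Your proof is correct, and for part (ii) it takes a genuinely different route from the paper. Part (i) and your uniform ``commutative plus prime implies no zero divisors'' observation (passing from $\bar x\bar y=0$ to $xRy\subseteq P$) coincide with what the paper does. For (ii), the paper's argument is a substitution trick: replacing $y$ by $yr$ in $x\circ y\in P$ and cancelling gives $y[x,r]\in P$ for all $y\in R$, whence $[x,r]\in P$ by primeness (for instance, taking $y=[x,r]s$ yields $[x,r]R[x,r]\subseteq P$), and this reduces (ii) to (i) in two lines with no mention of characteristic. You instead work entirely in the quotient: applying anticommutativity to the pair $(\bar x,\bar y\bar z)$ --- legitimate, since the hypothesis holds for all elements of $R$ and in particular for products --- you obtain $2\bar y\bar z\bar x=0$, then use primeness on $(2y)Rx\subseteq P$ together with $P\neq R$ to get $2R\subseteq P$, and finally note that in characteristic $2$ the relation $\bar x\bar y=-\bar y\bar x$ is commutativity. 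Each step checks out. Your version is longer but extracts strictly more structural information: it shows hypothesis (ii) actually forces $R/P$ to have characteristic $2$, a fact invisible in the paper's substitution argument (though consistent with it, since $x\circ y\in P$ then reads $2\bar x\bar y=0$). The paper's route is the more economical one and avoids any case analysis on the characteristic; both proofs invoke primeness in the same way, through conditions of the form $aRb\subseteq P$.
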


\begin{proof}
$(i)$ Assume that $[x,y]\in P$ and $\overline{R}=R/P,$ then $\overline{[x,y]}=\overline{0}$ and so $[\overline{x},\overline{y}]=\overline{0}$ hence $\overline{R}$ is a commutative, and since $P$ is a prime ideal of  $R,$ then $\overline{R}$ is an integral domain. Thus $\overline{R}$ is a commutative integral domain.

\noindent
$(ii)$ Assume that $x\circ y\in P.$ Replacing $y$ by $yr$ in last relation and using it, where $r\in R,$ we have
$y[x,r]\in P$ and since $P\neq R,$ then $[x,r]\in P,$ and by $(i)$, then $R/P$ is a commutative integral domain.
\end{proof}


\begin{prop}\label{lem-a-1}
Let $R$ be a ring, $P$ is a prime ideal of  $R$. If $R$ admits a generalized derivation $F$ with associated  derivation $d$ satisfying  $[x, F(x)]\in P$ for all $x \in R,$ then  either $R/P$ is a commutative integral domain or $d(R)\subseteq P$.
\end{prop}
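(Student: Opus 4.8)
The plan is to linearize the hypothesis $[x,F(x)]\in P$ and push everything down to the quotient ring $\overline{R}=R/P$, exploiting that $\overline{R}$ is prime. First I would polarize: replacing $x$ by $x+y$ in $[x,F(x)]\in P$ and subtracting off the pure terms $[x,F(x)]$ and $[y,F(y)]$ yields the additive identity $[x,F(y)]+[y,F(x)]\in P$ for all $x,y\in R$. This is the standard device for turning a quadratic-looking constraint into a bilinear one that can be probed by substitutions.

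Next I would feed structured substitutions into the polarized identity to expose the derivation $d$. The natural move is to replace $y$ by $yx$ (or $xy$), and expand using the generalized Leibniz rule $F(yx)=F(y)x+yd(x)$ together with $[a,bc]=b[a,c]+[a,b]c$. After expanding $[x,F(yx)]+[yx,F(x)]$ and using the original relation to cancel, the ambient commutator terms should collapse, leaving an expression of the form $y[x,d(x)]+(\text{commutator terms already in }P)\in P$, and more usefully a term like $[x,y]d(x)$ or $y[x,d(x)]$ appearing multiplicatively. The goal is to isolate a relation of the shape $[\overline{x},\overline{y}]\,\overline{d(x)}=\overline{0}$ in $\overline{R}$, or an equivalent one with a factor of the commutator on one side and $d(x)$ on the other, so that Fact~\ref{f1} (or directly the primeness of $P$) can be applied.

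Once such a relation $[x,r]\,d(x)\in P$ (for all $x\in R$ and suitable $r$) is in hand, I would invoke primeness in the form given by Fact~\ref{f1}: setting $\phi(x)=[x,r]$-type data and $\xi(x)=d(x)$, sandwiching an arbitrary element of $R$, one concludes that for each fixed $x$ either $x$ is central modulo $P$ or $d(x)\in P$. The delicate point is that Fact~\ref{f1} produces a dichotomy \emph{uniformly over the whole additive group}, not pointwise per $x$, so I must arrange the substitution so that the two additive functions are genuinely defined on all of $R$ and the product lands in $P$ after sandwiching by $R$. The dichotomy then reads: either $[x,r]\in P$ for all $x$ (whence $\overline{R}$ is commutative, and being a domain by primeness, a commutative integral domain, concluding via Lemma~\ref{lem-1}), or $d(x)\in P$ for all $x$, i.e.\ $d(R)\subseteq P$, which is the second alternative.

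The main obstacle I anticipate is the bookkeeping in the expansion step: the polarized identity mixes genuine commutator terms (which I want to eliminate using the original hypothesis) with the derivation terms (which I want to retain), and it is easy for a stray term involving $F(y)$ rather than $d(y)$ to survive and obstruct the clean separation into the $\phi/\xi$ form required by Fact~\ref{f1}. I would therefore choose the substitution carefully — likely $y\mapsto yx$ so that the inner $d$ lands on the repeated variable $x$ — and track exactly which terms already lie in $P$ by the original relation, so that what remains is precisely a product of two additive functions of $x$, sandwiched by an arbitrary ring element, ready for the primeness dichotomy.
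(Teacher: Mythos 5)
Your proposal is correct and follows essentially the same route as the paper: linearize $[x,F(x)]\in P$ to get $[x,F(y)]+[y,F(x)]\in P$, substitute $y\mapsto yx$ and cancel via the hypothesis to reach $[x,y]d(x)+y[x,d(x)]\in P$, then kill the $y[x,d(x)]$ term (the paper does this by comparing the relation with $y$ replaced by $ry$ against its left multiple by $r$) to obtain $[x,r]yd(x)\in P$, i.e.\ $[x,r]Rd(x)\subseteq P$, and finish with Fact~\ref{f1} and Lemma~\ref{lem-1}(i). You correctly anticipated both the key cancellation and the subtlety that Fact~\ref{f1} yields a dichotomy uniform over $R$ rather than pointwise, which is exactly how the paper concludes.
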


\begin{proof}
 We have
\begin{align}\label{a-1.1}
  [x,F(x)]\in P
\end{align}
for all $x \in R.$ By linearizing (\ref{a-1.1}), we have
\begin{align}\label{a-1.2}
  [x,F(y)]+[y,F(x)]\in P
\end{align}
for all $x,y \in R.$ Replacing $y$ by $yx$ in (\ref{a-1.2}) and applying it and (\ref{a-1.1}), we get
\begin{align}\label{a-1.3}
  [x,y]d(x)+y[x,d(x)]\in P
\end{align}
for all $x,y \in R.$ Left multiplying (\ref{a-1.3}) by $r,$ where $r\in R,$ we obtain
\begin{align}\label{a-1.4}
  r[x,y]d(x)+ry[x,d(x)]\in P
\end{align}
for all $x,y,r \in R.$ Putting $ry$ instead of $y$ in (\ref{a-1.3}) and using (\ref{a-1.3}) one can see that
\begin{align}\label{a-1.5}
  r[x,y]d(x)+[x,r]yd(x)+ry[x,d(x)]\in P
\end{align}
for all $x,y,r \in R.$ Subtracting (\ref{a-1.4}) from (\ref{a-1.5}), we thereby obtaining
$[x,r]yd(x)\in P$ this implies that $[x,r]Rd(x)\subseteq P.$ Thus, by Fact \ref{f1} either $[x,r]\in P$ or $d(x)\in P,$ for all $x, r \in R$. In the first case, $R/P$ is a commutative integral domain by Lemma \ref{lem-1}$(i)$. In the second case, $d(R)\subseteq P$.
\end{proof}


\begin{thm}\label{thm-1}
Let $R$ be a ring, $P$ is a prime ideal of  $R$. If $R$ admits a generalized derivation $F$ with associated  derivation $d$  satisfying   any one of the following conditions:
\begin{enumerate}
\item[$(i)$] $F(xy) \pm F(x)F(y)\in P$ for all $x,y\in R$
\item[$(ii)$] $F(xy)\pm F(y)F(x)\in P$ for all $x,y\in R$
\end{enumerate}
then either $R/P$ is a commutative integral domain or $d(R)\subseteq P$.
\end{thm}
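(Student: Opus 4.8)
The plan is to treat both identities by the mechanism already used in Proposition~\ref{lem-a-1}: expand $F$ through the defining rule $F(xy)=F(x)y+xd(y)$, make one substitution of the form $y\mapsto yz$, subtract a suitable multiple of the original identity so that the hypothesis cancels modulo $P$, and read off a relation of the shape $aRb\subseteq P$. Fact~\ref{f1} then forces a dichotomy, one branch being $d(R)\subseteq P$ and the other feeding Proposition~\ref{lem-a-1} (via $[x,F(x)]\in P$) or Lemma~\ref{lem-1}.

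For $(i)$ with the sign $+$, I would start from $F(xy)+F(x)F(y)\in P$ and put $y\mapsto yz$. Using $F(xyz)=F(xy)z+xyd(z)$ and $F(x)F(yz)=F(x)F(y)z+F(x)yd(z)$ and subtracting the original identity multiplied on the right by $z$, the terms $F(xy)z+F(x)F(y)z$ cancel modulo $P$ and I am left with
\begin{align*}
(x+F(x))\,y\,d(z)\in P\qquad\text{for all }x,y,z,
\end{align*}
that is, $(x+F(x))\,R\,d(z)\subseteq P$. By Fact~\ref{f1} (applied for each fixed $z$), either $d(z)\in P$ or $x+F(x)\in P$ for all $x$. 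If $d(R)\not\subseteq P$, I choose $z_{0}$ with $d(z_{0})\notin P$; then $x+F(x)\in P$, i.e. $F(x)\equiv -x\pmod P$, so $[x,F(x)]\equiv[x,-x]=0\pmod P$ and $[x,F(x)]\in P$. Proposition~\ref{lem-a-1} now yields the claim, and since $d(R)\not\subseteq P$ we land on ``$R/P$ a commutative integral domain''. The sign $-$ is identical with $x-F(x)$ replacing $x+F(x)$, giving $F(x)\equiv x$ and again $[x,F(x)]\in P$.

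For $(ii)$ the same first move on $F(xy)+F(y)F(x)\in P$ (replace $y$ by $yz$, subtract the original times $z$) produces
\begin{align*}
xyd(z)+yd(z)F(x)+F(y)[z,F(x)]\in P\qquad\text{for all }x,y,z.
\end{align*}
Here lies the main obstacle: because the second $F$ now sits on the right of $F(y)F(x)$, the factor $F(x)$ cannot be commuted past the inserted $z$, so instead of the clean two-term collapse of part $(i)$ one is left with the extra commutator term $F(y)[z,F(x)]$. My plan to dispose of it is to generate further relations by the substitutions $z\mapsto zr$ and $y\mapsto ry$ and to recombine them so that $F(y)[z,F(x)]$ is pushed into a product $aRb$, after which Fact~\ref{f1} separates the cases. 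I expect the outcome to be one of: $d(R)\subseteq P$; or $[x,r]\in P$ for all $x,r$, giving commutativity of $R/P$ by Lemma~\ref{lem-1}$(i)$; or the hypothesis $[x,F(x)]\in P$ of Proposition~\ref{lem-a-1}. In each branch the stated dichotomy follows, and the sign $-$ is handled in the same way. Controlling this commutator term is where essentially all the effort of the theorem is concentrated.
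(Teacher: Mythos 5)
Your part $(i)$ is correct, and it takes a genuinely shorter route than the paper's. Both arguments begin identically: a single substitution $y\mapsto yz$ yields $(x\pm F(x))yd(z)\in P$ (for the sign $-$ this is exactly the paper's relation (\ref{e1})). The paper then performs two further substitutions to extract $xd(z)yd(r)\in P$ and finishes with primeness and Lemma \ref{lem-1}; you instead fix $z_{0}$ with $d(z_{0})\notin P$, conclude $F(x)\equiv \mp x \pmod P$, hence $[x,F(x)]\in P$, and let Proposition \ref{lem-a-1} close the argument. That is valid and buys a cleaner reduction. One technical correction: Fact \ref{f1} does not apply ``for each fixed $z$'' --- with $z$ frozen, $\xi(x)=d(z)$ is a constant map, not an additive one --- but you do not need it: since $x$ and $z$ are independent variables, the definition of a prime ideal applied to $(x+F(x))Rd(z_{0})\subseteq P$ with $d(z_{0})\notin P$ directly gives $x+F(x)\in P$ for all $x$.

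Part $(ii)$, however, has a genuine gap, and you located it yourself: the term $F(y)[z,F(x)]$. Everything after your displayed relation is a plan plus an expectation, not a proof, and the plan does not close in any obvious way: substituting $y\mapsto ry$ in your relation and subtracting $r$ times it leaves $[x,r]yd(z)+F(r)y[z,F(x)]+rd(y)[z,F(x)]-rF(y)[z,F(x)]\in P$ --- more cross terms, and still nothing of the shape $aRb\subseteq P$. The repair is to change the first move rather than fight the commutator. Substitute into the \emph{first} argument: replacing $x$ by $xy$ in $F(xy)\mp F(y)F(x)\in P$ expands $F(xy)=F(x)y+xd(y)$ inside the right-hand factor $F(y)F(xy)$, where nothing has to be commuted past the inserted element, and after subtracting the original identity multiplied on the right by $y$ one gets $xyd(y)\mp F(y)xd(y)\in P$ (for the sign $-$, the paper's (\ref{2.2})). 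Then $x\mapsto tx$, minus $t$ times that relation, gives $[F(y),t]xd(y)\in P$, i.e.\ $[F(y),t]Rd(y)\subseteq P$; here Fact \ref{f1} applies legitimately, since for each fixed $t$ both $y\mapsto[F(y),t]$ and $d$ are additive in $y$. This yields either $d(R)\subseteq P$ or $[F(y),t]\in P$ for all $y,t$, and taking $t=y$ in the latter produces exactly the hypothesis $[y,F(y)]\in P$ of Proposition \ref{lem-a-1} --- the endgame you predicted. So your target dichotomy and final reduction are right, but the decisive middle step, the choice of substitution that prevents the commutator from ever appearing, is missing from your proposal.
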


\begin{proof} $(i)$
 First we consider the case
\begin{align}\label{1.1}
  F(xy)-F(x)F(y)\in P
\end{align}
for all $x,y\in R.$ Replacing $y$ by $yr$ in (\ref{1.1}) and using it, where $r\in R,$ we have
\begin{equation}\label{e1}
xyd(r) - F(x) yd(r) \in P ~\text{for all} ~x, r \in R.
\end{equation}
Now, replacing $x$ by $xz$ in equation (\ref{e1}), we get
\begin{equation}\label{e2}
xzyd(r) - F(x)zyd(r) - xd(z)yd(r) \in P
\end{equation}
for all $x, y, z, r \in R$. Again, replace $y$ by $zy$ in equation (\ref{e1}), to get
\begin{equation}\label{e3}
xzyd(r) - F(x)zyd(r) \in p
\end{equation}
for all $x, y, z, r \in R$. Now comparing (\ref{e2}) and (\ref{e3}), we find that $xd(z)yd(r) \in P$ for all $x, y, z, r \in R$ and hence $[x, s]d(z)yd(r) \in P$ for all $x, y, z, r, s \in R$, that is, $[x, s]d(z)Rd(r) \subseteq P$. Therefore, either $[x, s]d(z) \in P$ or $d(r) \in P$. In the first case, if  $[x, s]d(z)\in P$ for all $x, z, s \in R$, then $[x, s] R d(z) \subseteq P$ and hence by the primeness of $P$ and since we get either $ [x, s] \in P$ or $d(z) \in P$ for all $x, z, s \in R$. If $d(z) \in P$, then $d(R) \subseteq P$. On the other hand, if $ [x, s] \in P$ then   by Lemma  \ref{lem-1}$(i)$, $R/P$ is a commutative integral domain.\\

By the similar approach, we can prove the same conclusion holds for    $F(xy)+F(x)F(y)\in P$ for all $x, y \in R$.\\

\noindent
$(ii)$  Assume that
\begin{align}\label{2.1}
  F(xy)-F(y)F(x)\in P
\end{align}
for all $x,y\in R.$ Replacing $x$ by $xy$ in (\ref{2.1}) and using it, we have
\begin{align}\label{2.2}
  xyd(y)-F(y)xd(y)\in P
\end{align}
for all $x,y\in R.$ Writing $tx$ instead of $x$ in (\ref{2.2}), where $t\in R,$ we get
\begin{align}\label{2.3}
  txyd(y)-F(y)txd(y)\in P
\end{align}
for all $x,y,t\in R.$ Left multiplying (\ref{2.2}) by $t,$ where $t\in R,$ we obtain
\begin{align}\label{2.4}
  txyd(y)-tF(y)xd(y)\in P
\end{align}
for all $x,y,t\in R.$ Subtracting (\ref{2.3}) from (\ref{2.4}), this gives $[F(y),t]xd(y)\in P$ that is $[F(y),t]Rd(y)\subseteq P.$
Since $P$ is a prime ideal of $R$ and applying the Fact \ref{f1} shows that either $[F(y),t]\in P$ for all $y, t \in R$ or $d(R) \subseteq P$. If $[F(y), t] \in P$ for all $y, t \in R$, then by Lemma \ref{lem-a-1} $d(R)\subseteq P$ or $R/P$ is a commutative integral domain.\\

We may obtain the same conclusion by the same argument, when  $F(xy)+ F(y)F(x)\in P$ for all $x,y\in R$.

\end{proof}



\begin{thm}\label{thm-3}
Let $R$ be a ring, $P$ is a prime ideal of  $R$. If $R$ admits a generalized derivation $F$ with associated  derivation $d$  satisfying   any one of the following conditions:
\begin{enumerate}
\item[$(i)$] $F(xy)\pm xy\in P$ for all $x,y\in R$
\item[$(ii)$] $F(xy) \pm yx \in P$ for all $x,y\in R$
\item[$(iii)$]  $F(x)F(y) \pm xy \in P$ for all $x,y\in R$
\item[$(iv)$] $F(x)F(y) \pm yx \in P$ for all $x,y\in R$
\end{enumerate}
then either $R/P$ is a commutative integral domain or $d(R)\subseteq P$.

\end{thm}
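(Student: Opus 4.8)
The plan is to handle the four identities by the single mechanism already exploited in the proof of Theorem~\ref{thm-1}: substitute a product into the given relation, expand every occurrence of $F$ by the rule $F(uv)=F(u)v+u\,d(v)$, and then cancel the part already known to lie in $P$ by comparing with the original relation multiplied on one side by a ring element. Each such comparison isolates a single multiplicative expression inside $P$, which is then fed either to Fact~\ref{f1}, to Lemma~\ref{lem-1}, or to Proposition~\ref{lem-a-1}. I will carry out the ``$-$'' sign in each case; the ``$+$'' sign follows by the same computation with the evident sign changes, so it needs no separate treatment.

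For $(i)$ I start from $F(xy)-xy\in P$, replace $y$ by $yw$, and subtract the original relation right-multiplied by $w$. Since $F(xyw)=F(xy)w+xy\,d(w)$, the terms $F(xy)w$ and $xyw$ cancel and I am left with $xy\,d(w)\in P$ for all $x,y,w$. Picking any $a\in R\setminus P$ (available because $P\neq R$) gives $aR\,d(w)\subseteq P$, so primeness forces $d(w)\in P$, i.e.\ $d(R)\subseteq P$. For $(ii)$ I start from $F(xy)-yx\in P$ and apply it twice, once with $x$ replaced by $xw$ and once with $y$ replaced by $wy$; both yield the same expansion of $F(xwy)$, so subtraction leaves $[w,yx]\in P$ for all $w,x,y$. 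Thus every product is central in $R/P$, and a short argument completes commutativity: writing $z=[a,b]$ modulo $P$, centrality of $ab$ and $ba$ gives $aba\equiv a^2b\equiv ba^2$, hence $az\equiv za\equiv 0$, whence $z\,(R/P)\,a=0$ and primeness gives $[a,b]\in P$; Lemma~\ref{lem-1}$(i)$ then yields that $R/P$ is a commutative integral domain.

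The quadratic identities $(iii)$ and $(iv)$ are the substantive part. For $(iii)$, from $F(x)F(y)-xy\in P$ I replace $y$ by $yz$ and subtract the relation right-multiplied by $z$, obtaining $F(x)y\,d(z)\in P$, i.e.\ $F(x)R\,d(z)\subseteq P$ for all $x,z$. If $d(R)\not\subseteq P$, choosing $z_0$ with $d(z_0)\notin P$ forces $F(x)\in P$ for all $x$; but then the original identity gives $xy\in P$ for all $x,y$, so $R^2\subseteq P$ and hence $R\subseteq P$ (as $aRa\subseteq R^2\subseteq P$), contradicting $P\neq R$. Therefore $d(R)\subseteq P$. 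For $(iv)$, the same substitution in $F(x)F(y)-yx\in P$ leaves an extra term, namely $F(x)y\,d(z)+y[x,z]\in P$; to remove $y[x,z]$ I replace $y$ by $ty$ and subtract the relation left-multiplied by $t$, which cancels the commutator term and leaves $[F(x),t]\,y\,d(z)\in P$, that is $[F(x),t]R\,d(z)\subseteq P$. If $d(R)\not\subseteq P$ this forces $[F(x),t]\in P$ for all $x,t$, so each $F(x)$ is central modulo $P$ and in particular $[x,F(x)]\in P$; Proposition~\ref{lem-a-1} then applies and, since $d(R)\not\subseteq P$, gives that $R/P$ is a commutative integral domain.

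I expect the main obstacle to be the two quadratic cases, where a single substitution no longer linearizes the identity: in $(iv)$ the leftover term $y[x,z]$ must be peeled off by a second substitution before Fact~\ref{f1} becomes usable, and one must route the resulting dichotomy correctly into either $d(R)\subseteq P$ or the hypothesis of Proposition~\ref{lem-a-1}. A secondary point requiring care is $(ii)$, whose conclusion is commutativity obtained not from a commutator identity directly but from the statement that all products are central modulo $P$, which needs the brief primeness argument indicated above.
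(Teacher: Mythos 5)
Your proposal is correct --- every expansion and cancellation checks out --- but in two of the four cases it takes a genuinely different route from the paper, and in three it actually proves more than the stated disjunction. For $(i)$ you and the paper both reach $xyd(r)\in P$; the paper then inserts a commutator and concludes the dichotomy via primeness, while you fix $a\notin P$ and get $d(R)\subseteq P$ outright, which is sharper and makes the paper's separate $F=0$ discussion unnecessary. Case $(ii)$ is where you truly diverge: the paper substitutes $x\mapsto xy$ to obtain $yd(y)\in P$, linearizes, and after a chain of substitutions reaches $[x,r]Rrd(y)\subseteq P$ and hence the dichotomy; you instead equate the two expansions of $F(xwy)$ to get $[w,yx]\in P$ for all $w,x,y$, i.e. every product is central in $R/P$, and your primeness computation (for $z=[a,b]$, centrality of $ab$, $ba$ gives $az\equiv za\equiv 0$, then centrality of $ra$ gives $z\,(R/P)\,a=0$, so $[a,b]\in P$) is valid and yields commutativity of $R/P$ \emph{unconditionally} --- strictly stronger than the theorem's disjunction and entirely independent of $d$. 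In $(iii)$ both proofs derive $F(x)Rd(z)\subseteq P$; the paper keeps the branch $F(R)\subseteq P$ alive and argues commutativity from $xy\in P$, whereas you correctly observe that this branch is vacuous: $R^{2}\subseteq P$ forces $aRa\subseteq P$ and hence $R\subseteq P$, contradicting the properness built into the paper's definition of a prime ideal, so $d(R)\subseteq P$ holds always --- again sharper, and it quietly exposes that the paper's $F=0$ and $F(R)\subseteq P$ sub-cases in $(i)$, $(iii)$, $(iv)$ cannot actually occur. In $(iv)$ the paper's same-variable substitution $y\mapsto yx$ gives $F(x)Rd(x)\subseteq P$ and Fact~\ref{f1} finishes in one stroke; your independent-variable substitution costs you the extra term $y[x,z]$, which your second substitution legitimately removes, and routing $[F(x),t]\in P$ (hence $[x,F(x)]\in P$) through Proposition~\ref{lem-a-1} under the standing assumption $d(R)\not\subseteq P$ is sound --- it is the same device the paper itself uses in Theorem~\ref{thm-1}$(ii)$, just one step longer than the paper's argument here. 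Your dispatch of the plus-sign cases by sign changes is fine (the paper achieves the same by replacing $F$ with $-F$).
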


\begin{proof}$(i)$
We have
\begin{align}\label{3.1}
F(xy)-xy\in P
\end{align}
for all $x,y\in R.$
If $F=0$, then $xy \in P$ and hence $[x, r] y \in P$ for all $x, y, r \in R$. Since $P \neq R$ and $P$ is prime so $R/P$ is a commutative integral domain. Now, onward we assume that $F\neq 0$.
Replacing $y$ by $yr$ in (\ref{3.1}) and using it, where $r\in R,$ we have $xyd(r)\in P$, that is, $[x, z]yd(r) \in P$ for all $x, y, z, r \in R$.
Since $P\neq R$, and $P$ is prime. Therefore, either $[x, z] \in P$ for all $x, z \in R$  or  $d(R)\subseteq P$. If $[x, z] \in P$ for all $x, z\in R$, then by Lemma \ref{lem-1}$(i)$, $R/P$ is a commutative integral domain.\\

If $F(xy)-xy\in P$ for all $x, y \in R$, the generalized derivation $-F$ satisfies the condition $(-F)(xy)-xy\in P$ for all $x, y \in R$, and hence by above we get the required result.\\

\noindent
$(ii)$ By our hypothesis
\begin{align}\label{4.1}
F(xy)-yx\in P
\end{align}
for all $x,y\in R.$ If $F=0$, then $yx\in P$ for all $x, y \in R$ and hence $[y, r]x \in P$. Since $P \neq R$ and $P$ is prime, so by Lemma \ref{lem-1}$(i)$ $R/P$ is a commutative integral domain. Now, we assume that $F \neq 0$.
Replacing $x$ by $xy$ in (\ref{4.1}) and using it, where $r\in R,$ we have $xyd(y)\in P$ and since $P\neq R,$ then
\begin{align}\label{4.2}
yd(y)\in P
\end{align}
for all $y\in R.$ By linearizing (\ref{4.2}), we get
\begin{align}\label{4.3}
xd(y)+yd(x)\in P
\end{align}
for all $x,y\in R.$ Writing $ry$ instead of $y$ in (\ref{4.3}), where $r\in R,$ we obtain
\begin{align}\label{4.4}
xd(r)y+xrd(y)+ryd(x)\in P
\end{align}
for all $x,y,r\in R.$ Left multiplying (\ref{4.3}) by $r,$ where $r\in R,$ this gives
\begin{align}\label{4.5}
rxd(y)+ryd(x)\in P
\end{align}
for all $x,y,r\in R.$ Subtracting (\ref{4.5}) from (\ref{4.4}), we have $xd(r)y+[x,r]d(y)\in P.$ Putting $xr$ instead of $x$ in last relation and using (\ref{4.2}), we get $[x,r]rd(y)\in P.$ Replacing $x$ by $xs$ in last relation and using it, we have $[x,r]srd(y)\in P,$ that is $[x,r]Rrd(y)\subseteq P$. Thus, by Fact \ref{f1}, we have either $[x, r] \in P$ or $rd(y) \in P$ for all $x, r, y \in R$. If $[x, r] \in P$ for all $x, r \in R$, then by Lemma \ref{lem-1}$(i)$, $R/P$ is a commutative integral domain. if $rd(y) \in P$, for all $r, y \in R$, since $P\neq R$ then $d(y)\in P$,  that is $d(R)\subseteq P.$\\

If $F(xy)+ yx\in P$ for all $x, y \in R$, then by similar arguments as above with necessary variations yields the required conclusion.\\

\noindent
$(iii)$ First we have
\begin{align}\label{7.1}
F(x)F(y) - xy\in P
\end{align}
for all $x,y\in R.$ If $F = 0$, then $xy \in P$ and hence $[x, r]y\in P$ for all $x, y, r \in R$. Since $P$ is prime so, $R/P$ is a commutative integral domain by Lemma \ref{lem-1}$(i)$. Onward we assume that $F \neq 0$.
 Replacing $y$ by $yr$ in (\ref{7.1}) and using it, where $r\in R,$ we have $F(x)yd(r)\in P$ that is $F(x)Rd(r)\subseteq P.$ Then $F(x)\in P$ or $d(r)\in P.$ If $F(x)\in P,$ then $F(R)\subseteq P.$ By using last relation in  (\ref{7.1}), we get $xy\in P$ that is, $[x, r]y \in P$ for all $x, y r \in P$. Since $P\neq R$ and $P$ is prime we get  $[x, r]\in P$ for all $x, r \in R$ and hence  $R/P$ is a commutative integral domain. On the other hand $d(R)\subseteq P$.\\

Using a similar technique with necessary variation, we can prove the same conclusion holds for $F(x)F(y) + xy\in P$ for all $x, y \in R$.\\

 \noindent
 $(iv)$
 Assume that
\begin{align}\label{8.1}
F(x)F(y) -yx\in P
\end{align}
for all $x,y\in R.$ If $F=0$, then $yx \in P$ using the same techniques as used in the proof of $(ii)$,  $R/P$ is a commutative integral domain. Now, onward we assume that $F \neq 0$.
Replacing $y$ by $yx$ in (\ref{8.1}) and using it, we have $F(x)yd(x)\in P$ that is $F(x)Rd(x)\subseteq P.$ Since $P$ is a prime ideal of $R,$ then by Fact \ref{f1}, either $F(R)\subseteq  P$ or $d(R)\subseteq  P$. If  then $F(R)\subseteq P$ and by using last relation in (\ref{8.1}), we get $yx\in P$ and hence  using similar arguments as used above we get $R/P$ is a commutative integral domain.

If $F(x)F(y  + yx\in P$ for all $x, y \in R$, then using a similar approach, the result follows.

\end{proof}



\begin{thm}\label{thm-5}
Let $R$ be a ring, $P$ is a prime ideal of  $R$. If $R$ admits a generalized derivation $F$ with associated  derivation $d$  satisfying   any one of the following conditions:
\begin{enumerate}
\item[$(i)$] $F(xy)- [x,y]\in P$ for all $x,y\in R$
\item[$(ii)$] $F(xy)-  (x\circ y) \in P$ for all $x,y\in R$
\item[$(iii)$] $F(x)F(y)-[x,y]\in P$ for all $x,y\in R$
\item[$(iv)$] $F(x)F(y) - (x\circ y) \in P$ for all $x,y\in R$
\end{enumerate}
then $R/P$ is a commutative integral domain.

\end{thm}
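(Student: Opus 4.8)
The plan is to handle all four conditions by the same two-step reduction used in Theorems \ref{thm-1} and \ref{thm-3}, the essential new point being that here the branch $d(R)\subseteq P$ is itself forced to produce commutativity, so that the dichotomous conclusion of the earlier results collapses to the single assertion stated. In every case I would first replace $y$ by $yr$ in the hypothesis and expand, using $F(ab)=F(a)b+a\,d(b)$ together with the identities $[x,yr]=[x,y]r+y[x,r]$ and $x\circ(yr)=(x\circ y)r-y[x,r]$. Subtracting the right multiple by $r$ of the original hypothesis then cancels the $F(xy)$- (resp.\ $F(x)F(y)$-) terms and leaves a reduced relation: in cases $(i)$ and $(ii)$ one obtains $xy\,d(r)-y[x,r]\in P$ and $xy\,d(r)+y[x,r]\in P$ respectively, while in cases $(iii)$ and $(iv)$, where the extra factor $F(x)$ is produced by the product $F(x)F(yr)$, one obtains $F(x)y\,d(r)-y[x,r]\in P$ and $F(x)y\,d(r)+y[x,r]\in P$, for all $x,y,r\in R$.

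For the second step I would substitute $y\mapsto sy$ in the reduced relation and compare with its left multiple by $s$; since the $[x,r]$- (resp.\ $[F(x),r]$-) term is unaffected by this comparison, it cancels, leaving $[x,s]\,y\,d(r)\in P$ in cases $(i),(ii)$ and $[F(x),s]\,y\,d(r)\in P$ in cases $(iii),(iv)$. Thus $[x,s]R\,d(r)\subseteq P$ (resp.\ $[F(x),s]R\,d(r)\subseteq P$), and the primeness of $P$ via Fact \ref{f1} forces one of two possibilities: either $d(R)\subseteq P$, or the commutator term lies in $P$ for all arguments. In cases $(i),(ii)$ the latter reads $[x,s]\in P$ for all $x,s$, and Lemma \ref{lem-1}$(i)$ finishes. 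In cases $(iii),(iv)$ it reads $[F(x),s]\in P$ for all $x,s$, so in particular $[x,F(x)]\in P$ for all $x$; Proposition \ref{lem-a-1} then yields either that $R/P$ is a commutative integral domain or, once more, that $d(R)\subseteq P$.

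It remains to dispose of the surviving branch $d(R)\subseteq P$, and this is where the commutator or anticommutator on the right-hand side does the decisive work. Feeding $d(R)\subseteq P$ back into the reduced relation makes the term $xy\,d(r)$ (resp.\ $F(x)y\,d(r)$) lie in $P$, so that $y[x,r]\in P$ for all $x,y,r\in R$; hence $R[x,r]\subseteq P$ and therefore $[x,r]R[x,r]\subseteq P$, whence $[x,r]\in P$ for all $x,r$ by primeness. A final appeal to Lemma \ref{lem-1}$(i)$ shows that $R/P$ is a commutative integral domain. I expect this last step to be the crux: the earlier theorems could only conclude ``commutative or $d(R)\subseteq P$'', and the whole gain here is precisely the observation that, when the defining identity is measured against $[x,y]$ or $x\circ y$, the alternative $d(R)\subseteq P$ is not really an alternative at all but instead reproduces commutativity.
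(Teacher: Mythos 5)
Your proposal is correct, and in cases $(i)$ and $(ii)$ it is in substance identical to the paper's proof: the same reduced relations (\ref{5.2}) and (\ref{6.2}), the same second substitution ($y\mapsto ty$ there, $y\mapsto sy$ for you) producing $[x,t]yd(r)\in P$, and the same feedback of the branch $d(R)\subseteq P$ into the reduced relation to force $y[x,r]\in P$ and hence $[x,r]\in P$. In cases $(iii)$ and $(iv)$ you take a genuinely (if mildly) different route: the paper specializes $r=x$ in the reduced relation $F(x)yd(r)\mp y[x,r]\in P$ (see (\ref{9.2}) and (\ref{10.2})), so the commutator term vanishes outright, giving $F(x)Rd(x)\subseteq P$ and the dichotomy $F(R)\subseteq P$ or $d(R)\subseteq P$; in the first branch it feeds $F(R)\subseteq P$ back into the original hypothesis to recover $[x,y]\in P$ (resp.\ $x\circ y\in P$), while in the second it again uses the reduced relation. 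You instead run the commutation trick a second time to obtain $[F(x),s]Rd(r)\subseteq P$ and, in the branch $[F(x),s]\in P$, invoke Proposition \ref{lem-a-1} via the specialization $[x,F(x)]\in P$ --- precisely the manoeuvre the paper itself uses in Theorem \ref{thm-1}$(ii)$. Both routes are sound and comparable in length: the paper's $r=x$ trick is more direct, while yours buys complete uniformity across the four cases and reuses an existing proposition instead of introducing the $F(R)\subseteq P$ branch. A small point in your favour: where the paper dismisses $y[x,r]\in P$ for all $y\in R$ with the terse ``since $P\neq R$'', you supply the honest justification $R[x,r]\subseteq P$, hence $[x,r]R[x,r]\subseteq P$ and $[x,r]\in P$ by primeness, which is the argument actually needed when $R$ has no identity. (Your omission of the paper's $F=0$/$F\neq 0$ case split is harmless, since --- as in the paper itself --- that hypothesis is never used after it is made.)
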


\begin{proof} $(i)$
Assume that
\begin{align}\label{5.1}
F(xy) - [x,y]\in P
\end{align}
for all $x,y\in R.$ If $F=0$, then $[x, y]\in P$ and hence by Lemma \ref{lem-1}$(i)$, $R/P$ is a commutative integral domain. On ward we assume that $F \neq 0$.
Now,replacing $y$ by $yr$ in (\ref{5.1}) and using it, where $r\in R$, we have
\begin{align}\label{5.2}
xyd(r) - y[x,r]\in P
\end{align}
for all $x,y,r\in R.$ Writing $ty$ instead of $y$ in (\ref{5.2}), where $t\in R$, we get
\begin{align}\label{5.3}
xtyd(r) - ty[x,r]\in P
\end{align}
for all $x,y,r,t \in R.$ Left multiplying (\ref{5.2}) by $t,$ where $t\in R,$ we obtain
\begin{align}\label{5.4}
txyd(r) - ty[x,r]\in P
\end{align}
for all $x,y,r,t\in R.$ Subtracting (\ref{5.3}) from (\ref{5.4}), this gives $[x,t]yd(r)\in P$ that is $[x,t]Rd(r)\subseteq P.$ Thus, either  $[x,t]\in P$ or $d(r)\in P.$ If $[x,t]\in P$, then $R/P$ is a commutative integral domain by Lemma~\ref{lem-1}(i). In case $d(R)\subseteq P.$ By using last relation in (\ref{5.2}), we have $y[x,r]\in P$ and since $P\neq R,$ then $[x,r]\in P$ and hence $R/P$ is a commutative integral domain by Lemma~\ref{lem-1}(i). \\

\noindent
$(ii)$
Assume that
\begin{align}\label{6.1}
F(xy) - (x\circ y)\in P
\end{align}
for all $x,y\in R.$ If $F=0$, then $x\circ y\in P$ and hence by Lemma \ref{lem-1}$(ii)$, we get the required result. Now onward we assume that $F \neq 0$.
Replacing $y$ by $yr$ in (\ref{6.1}) and using it, where $r\in R,$ we have
\begin{align}\label{6.2}
xyd(r)+ y[x,r]\in P
\end{align}
for all $x,y,r\in R.$ Now, using the same arguments as used in $(i)$ after  (\ref{5.2}), we will have $R/P$ is a commutative integral domain.\\

\noindent
$(iii)$ Assume that
\begin{align}\label{9.1}
F(x)F(y) - [x,y]\in P
\end{align}
for all $x,y\in R.$ If $F=0$, then $[x,y]\in P$ for all $x, y \in R$ and hence by Lemma \ref{lem-1}$(i)$, we get the required result. Onward we assume that $F \neq 0$.
Replacing $y$ by $yr$ in (\ref{9.1}) and using it, we have
\begin{align}\label{9.2}
F(x)yd(r)- y[x,r]\in P
\end{align}
for all $x,y,r\in R.$ Putting $r=x$ in (\ref{9.2}), we get $F(x)yd(x)\in P$ that is $F(x)Rd(x)\subseteq P.$ Thus by Fact \ref{f1} either $F(x)\in P$ or $d(x)\in P$ for all $x \in R$. If  $F(x)\in P$ for all $x \in R$ then $F(R)\subseteq P$ and hence  by using last relation in (\ref{9.1}), we get $[x,y]\in P$ for all $x, y \in R$ and hence  by Lemma~\ref{lem-1}(i) $R/P$ is a commutative integral domain. In the second case $d(R)\subseteq P$ and by using last relation in (\ref{9.2}), then $y[x,r]\in P$ and since $P\neq R,$ then $[x,r]\in P$ for all $x, r \in R$  and hence $R/P$ is a commutative integral domain by  Lemma~\ref{lem-1}(i).\\

\noindent
$(iv)$
Assume that
\begin{align}\label{10.1}
F(x)F(y) - (x\circ y)\in P
\end{align}
for all $x,y\in R.$ If $F =0$, then $x\circ y\in P$ for all $x, y \in R$, then $R/P$ is a commutative integral domain by Lemma \ref{lem-1}$(ii)$. Now we assume that $F \neq 0$
Replacing $y$ by $yr$ in (\ref{9.1}) and using it, we have
\begin{align}\label{10.2}
F(x)yd(r)+ y[x,r]\in P
\end{align}
for all $x,y,r\in R.$ Now, applying the similar techniques as used after  (\ref{9.2}) in the proof of  $(iii)$ yields the required result.
\end{proof}

 By similar arguments as above with necessary variations, we can prove the following:
\begin{thm}\label{thm-6}
Let $R$ be a ring, $P$ is a prime ideal of  $R$. If $R$ admits a generalized derivation $F$ with associated  derivation $d$  satisfying   any one of the following conditions:
\begin{enumerate}
\item[$(i)$] $F(xy)+ [x,y]\in P$ for all $x,y\in R$
\item[$(ii)$] $F(xy)+ (x\circ y) \in P$ for all $x,y\in R$
\item[$(iii)$] $F(x)F(y)+[x,y]\in P$ for all $x,y\in R$
\item[$(iv)$] $F(x)F(y) + (x\circ y) \in P$ for all $x,y\in R$
\end{enumerate}
then $R/P$ is a commutative integral domain.

\end{thm}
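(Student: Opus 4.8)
The plan is to mirror the four arguments of Theorem~\ref{thm-5}, replacing each minus sign by a plus sign and tracking the single genuine difference this produces. In every case I would first dispose of the degenerate possibility $F=0$: for $(i)$ and $(iii)$ this forces $[x,y]\in P$, while for $(ii)$ and $(iv)$ it forces $x\circ y\in P$, so that $R/P$ is a commutative integral domain by Lemma~\ref{lem-1}$(i)$ or $(ii)$ respectively. Hence I may assume $F\neq 0$ throughout.

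For $(i)$ I would substitute $y\mapsto yr$ and expand using $F(xyr)=F(xy)r+xy\,d(r)$ and $[x,yr]=[x,y]r+y[x,r]$; subtracting $r$ times the hypothesis annihilates the $F(xy)r$ and $[x,y]r$ terms and leaves $xy\,d(r)+y[x,r]\in P$. For $(ii)$ the same substitution, together with the identity $x\circ(yr)=(x\circ y)r-y[x,r]$ (where the anticommutator quietly turns into a commutator), yields $xy\,d(r)-y[x,r]\in P$; the two cases thus differ only by the sign of the second summand. In either case I would then replace $y$ by $ty$ and, separately, left-multiply the relation by $t$; subtracting the two resulting inclusions kills the surviving commutator term and produces $[x,t]y\,d(r)\in P$, i.e. $[x,t]R\,d(r)\subseteq P$. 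By primeness of $P$ either $[x,t]\in P$ for all $x,t$, in which case Lemma~\ref{lem-1}$(i)$ finishes the proof, or $d(R)\subseteq P$; in the latter case feeding $d(R)\subseteq P$ back into $xy\,d(r)\pm y[x,r]\in P$ leaves $y[x,r]\in P$, forcing $[x,r]\in P$ since $P\neq R$, and again Lemma~\ref{lem-1}$(i)$ applies.

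For $(iii)$ and $(iv)$ I would argue analogously but exploit that $F$ occurs quadratically. Substituting $y\mapsto yr$, expanding $F(x)F(yr)=F(x)F(y)r+F(x)y\,d(r)$, and subtracting $r$ times the hypothesis reduces $(iii)$ to $F(x)y\,d(r)+y[x,r]\in P$ and, via $x\circ(yr)=(x\circ y)r-y[x,r]$, reduces $(iv)$ to $F(x)y\,d(r)-y[x,r]\in P$. The decisive simplification is the specialization $r=x$: since $[x,x]=0$, both commutator terms vanish and I obtain $F(x)y\,d(x)\in P$, that is $F(x)R\,d(x)\subseteq P$. Applying Fact~\ref{f1} with $\phi=F$ and $\xi=d$ gives the dichotomy $F(R)\subseteq P$ or $d(R)\subseteq P$. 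If $d(R)\subseteq P$, the reduced relation collapses to $y[x,r]\in P$, forcing $[x,r]\in P$ and commutativity by Lemma~\ref{lem-1}$(i)$. If instead $F(R)\subseteq P$, substituting this into the original hypothesis leaves $[x,y]\in P$ in case $(iii)$ and $x\circ y\in P$ in case $(iv)$, so Lemma~\ref{lem-1}$(i)$ or $(ii)$ yields the conclusion.

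The only real subtlety, and the step I would check most carefully, is the bookkeeping of signs: the identity $x\circ(yr)=(x\circ y)r-y[x,r]$ means that passing from the commutator versions to the anticommutator versions flips the sign of the $y[x,r]$ term, and one must verify that this flip never obstructs the two decisive cancellations—the $ty$-versus-left-multiplication subtraction in $(i)$ and $(ii)$, and the $r=x$ specialization in $(iii)$ and $(iv)$. In each case the cancellation targets the $d(r)$ term or the commutator term and is manifestly insensitive to the sign of the surviving summand, so no new phenomenon arises and the conclusions of Theorem~\ref{thm-5} carry over verbatim.
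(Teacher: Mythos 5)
Your proposal is correct and is essentially the paper's own proof: the paper proves Theorem~\ref{thm-6} only by the remark that it follows ``by similar arguments as above with necessary variations'' from Theorem~\ref{thm-5}, and your write-up carries out exactly those variations, including the correct sign-tracking identity $x\circ(yr)=(x\circ y)r-y[x,r]$, the $ty$-versus-left-multiplication subtraction in $(i)$--$(ii)$, the $r=x$ specialization with Fact~\ref{f1} in $(iii)$--$(iv)$, and the feedback of $d(R)\subseteq P$ or $F(R)\subseteq P$ into the reduced relations before invoking Lemma~\ref{lem-1}.
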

\section{Examples}

In this section we construct some examples to show that the prime ideal
in the hypothesis of  our results are essential.

\begin{exa}\label{ex1}
  Consider the ring $R=\left \{\left(\begin{matrix}
0 & a & b \\
0 & 0 & 2c \\
0 & 0 & 0
\end{matrix}\right): a, b, c\in\mathbb{Z}_4 \right \}.$
Let \linebreak $P=\left\{\left(\begin{matrix}
0 & 0 & 0 \\
0 & 0 & 0 \\
0 & 0 & 0
\end{matrix}\right)\right\}$ be an ideal of $R.$
Let us define $F, d: R \to R$ by
$$F\left(\begin{matrix}
0 & a & b \\
0 & 0 & 2c \\
0 & 0 & 0
\end{matrix}\right)=\left(\begin{matrix}
0 & 2a & 0 \\
0 & 0 & 0 \\
0 & 0 & 0
\end{matrix}\right),
  d\left(\begin{matrix}
0 & a & b \\
0 & 0 & 2c \\
0 & 0 & 0
\end{matrix}\right)=\left(\begin{matrix}
0 & 0 & c \\
0 & 0 & 0 \\
0 & 0 & 0
\end{matrix}\right).$$ It is easy to very that $F$ is a generalized derivation associated with derivation $d$  $(i)$ $F(xy) \pm F(x)F(y)\in P$ and $(ii)$ $F(xy) \pm F(y)F(x)\in P$ for all $x, y \in R$.  Since
$\left(\begin{matrix}
0 & 1 & 0 \\
0 & 0 & 0 \\
0 & 0 & 0
\end{matrix}\right)R\left(\begin{matrix}
0 & 1 & 0 \\
0 & 0 & 0 \\
0 & 0 & 0
\end{matrix}\right)\subseteq P,$ but $\left(\begin{matrix}
0 & 1 & 0 \\
0 & 0 & 0 \\
0 & 0 & 0
\end{matrix}\right)\not\in P$ is not a prime ideal of $R$.
We see that $d(R)\not\subseteq P$ and $R/P$ is noncommutative. Hence the prime ideal is crucial for Theorem \ref{thm-1}$(i)$  and $(ii)$.

\end{exa}


\begin{exa}
Consider the ring $R$, ideal $P$ and derivation $d$ as in Example \ref{ex1}.
 Let $F:R\rightarrow R$ defined by $F(x)=\pm x$ for all $x \in R$. Then it is straightforward to verify that $F$ is a generalized derivation in $R$.
   Also we see that $P$ is not a prime ideal of $R$ because
$$\left(\begin{matrix}
0 & 1 & 0 \\
0 & 0 & 0 \\
0 & 0 & 0
\end{matrix}\right)R\left(\begin{matrix}
0 & 1 & 0 \\
0 & 0 & 0 \\
0 & 0 & 0
\end{matrix}\right)\subseteq P,  ~~~~\left(\begin{matrix}
0 & 1 & 0 \\
0 & 0 & 0 \\
0 & 0 & 0
\end{matrix}\right)\not\in P.$$
Moreover, it is straightforward to check that $F$ satisfies $(i)$ $F(xy) \pm xy\in P,$ (ii) $F(x)F(y) \pm xy\in P$.
However,  $d(R)\not\subseteq P$ and $R/P$ is noncommutative. Hence, prime ideal is essential in Theorem \ref{thm-3}$(i)$ and $(iii)$.
\end{exa}



\begin{exa}
Consider $S$ be a ring such that $s^2=0$ for all $s\in S,$ but the product of some elements of $S$ is nonzero. Since $s^2=0,$ so $(s+t)^2=0$ for all $s, t\in S$ this implies that $s\circ t=0$ for all $s, t\in S.$ Suppose
$R=\left \{\left(\begin{matrix}
x & y  \\
0 & x
\end{matrix}\right): x, y\in S \right \}.$ Define $F=d:R\rightarrow R$ as $F\left(\begin{matrix}
x & y  \\
0 & x
\end{matrix}\right)=\left(\begin{matrix}
0 & y  \\
0 & 0
\end{matrix}\right),$ clearly $F$ and $d$ are  (generalized) derivations. Let $P=\left \{\left(\begin{matrix}
0 & 3y  \\
0 & 0
\end{matrix}\right): y\in S\right \}.$ We see that $\left(\begin{matrix}
0 & y  \\
0 & 0
\end{matrix}\right)R\left(\begin{matrix}
3x & 0  \\
0 & 3x
\end{matrix}\right)\subseteq P,$ but $\left(\begin{matrix}
0 & y  \\
0 & 0
\end{matrix}\right),\left(\begin{matrix}
3x & 0  \\
0 & 3x
\end{matrix}\right)\not\in P$ so $P$ is not prime ideal of $R.$ Also we see that $F(x)F(y)+(x\circ y) \in P$ and $F(x)F(y)-(x\circ y) \in P,$ but $d(R)\not\subseteq P$ and $R/P$ is noncommutative. Therefore, prime ideal is essential in the hypothesis of Theorem \ref{thm-5}$(iv)$  and Theorem \ref{thm-6}$(iv)$.

\end{exa}

\subsection*{Acknowledgment}
For the First author, this research is supported by the National
Board of Higher Mathematics (NBHM), India, Grant No. $02011/16/2020$ NBHM (R. P.)
R \& D II/ $7786$.


\end{document}